\documentclass{article}

\usepackage{amsmath, amsthm, amsfonts}
\usepackage{graphicx}
\usepackage[all]{xy}

\usepackage{amsthm}
\theoremstyle{plain}

\newtheorem{thm}{Theorem}
\newtheorem{lem}[thm]{Lemma}
\newtheorem{cor}[thm]{Corollary}

\theoremstyle{definition}
\newtheorem{exl}[thm]{Example}

\renewcommand{\phi}{\varphi}
\newcommand{\lift}{\widetilde}

\newcommand{\RT}{\text{\textit{RT}}}
\newcommand{\pd}{\partial}

\newcommand{\xof}[1]{\frac{\Delta}{\Delta #1}}
\newcommand{\fox}[1]{\frac{\pd}{\pd #1}}

\newcommand{\Reid}{\mathcal{R}}

\DeclareMathOperator{\Coin}{Coin}
\DeclareMathOperator{\ind}{ind}
\DeclareMathOperator{\sign}{sign}
\DeclareMathOperator{\id}{id}

\newcommand{\Z}{\mathbb{Z}}

\usepackage{hyperref}

\theoremstyle{plain}

\begin{document}
\bibliographystyle{hplain}

\title{A formula for the coincidence Reidemeister trace of selfmaps on
  bouquets of circles\thanks{Mathematics Subject Classification (2000): 54H25, 55M20}}
\author{{\sc P. Christopher Staecker}\thanks{\emph{Email:}
  cstaecker@messiah.edu} \\ 
Assistant Professor, Dept. of Mathematical Sciences, \\
Messiah College, Grantham PA, 17027.}

\maketitle

\begin{abstract}
We give a formula for the coincidence Reidemeister trace of selfmaps on
bouquets of circles in terms of the Fox calculus. Our formula reduces
the problem of computing the coincidence Reidemeister trace to the
problem of distinguishing doubly twisted conjugacy classes in free
groups.
\end{abstract}

\section{Introduction}
Fadell and Husseini, in \cite{fh83} proved the following:
\begin{thm}\label{fh}[Fadell, Husseini, 1983]
Let $X$ be a bouquet of circles, $G = \pi_1(X)$, and let $G_0$ be the
set of generators of $G$. 
If $f: X \to X$ induces the 
map $\phi:G \to G$, then there is some lift $\lift f:
\lift X \to \lift X$ to the universal covering space with 
\[ \RT(f,\lift f) = \rho\left(  1 - \sum_{a \in G_0} \frac{\pd}{\pd
  a}\phi(a)\right), \] 
where $\rho:\Z G \to \Z\Reid(f)$ is the linearization of the projection
into twisted conjugacy classes, and $\pd$ denotes the Fox derivative.
\end{thm}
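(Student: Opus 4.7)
The plan is to give $X$ its minimal CW structure, with a single $0$-cell $v$ and one $1$-cell $e_a$ for each generator $a \in G_0$, and then compute the Reidemeister trace cellularly on the universal cover $\widetilde{X}$, which is the Cayley graph of $G$. I would first choose the lift $\widetilde{f}$ so that a preferred lift $\widetilde{v}$ of $v$ is sent to itself; this determines the reference lifts $\widetilde{e_a}$ of the $1$-cells, and it fixes the identification between twisted conjugacy classes and the $G$-orbits of fixed points of $\widetilde{f}$ in $\widetilde{X}$.

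Next I would write the Reidemeister trace as a formal alternating sum over cells, of the form $RT(f,\widetilde{f}) = \rho\bigl(\sum_{\text{cells } e} (-1)^{\dim e}\, [e]\bigr)$, where $[e] \in \Z G$ encodes, for each translate $g \cdot \widetilde{e}$ of the chosen lift, the coefficient with which $\widetilde{f}$ covers it (signed appropriately). The $0$-cell contributes $+1$ since $\widetilde{f}(\widetilde{v}) = \widetilde{v}$. For each $1$-cell $e_a$, the lift $\widetilde{f}$ traces out the path in $\widetilde{X}$ spelled by the word $\phi(a)$; the contribution of $e_a$ records, for each group element $g$, the signed number of times this path traverses the translate $g \cdot \widetilde{e_a}$.

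The key combinatorial identification, which I would prove as the main lemma, is that this $1$-cell contribution is precisely $\frac{\partial}{\partial a}\phi(a) \in \Z G$. This is the geometric content of Fox calculus: if $w = a_{i_1}^{\epsilon_1} \cdots a_{i_n}^{\epsilon_n}$, then $\frac{\partial w}{\partial a}$ is the sum, over positions $j$ where $a_{i_j} = a$, of $\epsilon_j$ times the prefix group element $a_{i_1}^{\epsilon_1} \cdots a_{i_{j-1}}^{\epsilon_{j-1}}$ (adjusted when $\epsilon_j = -1$), and these prefixes are exactly the group elements indexing the translates of $\widetilde{e_a}$ traversed by the lifted path. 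Summing the $0$- and $1$-dimensional contributions with the appropriate signs yields $1 - \sum_{a \in G_0} \frac{\partial}{\partial a}\phi(a)$, and applying $\rho$ gives the formula.

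The main obstacle will be step three: pinning down the sign conventions and the precise geometric meaning of the Fox derivative so that it matches the cellular definition of the Reidemeister trace. In particular, one must be careful about what happens at inverse letters in $\phi(a)$, where the path runs backward across a translate $g \cdot \widetilde{e_a}$, and verify that the $\Z G$-valued contribution agrees with the Fox derivative formula rather than merely agreeing up to a global twist. Once this accounting is done correctly, the assembly into the final identity is formal.
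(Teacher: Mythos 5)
Your proposal is correct in outline, and it is essentially the original Fadell--Husseini argument that this paper cites rather than reproves: give $X$ its one-vertex CW structure, use the trace-like (cellular) formula for the Reidemeister trace on the universal cover, and identify the diagonal entries of the Fox Jacobian as the $1$-cell contributions. The one ingredient you are importing wholesale is that cellular trace formula itself, i.e.\ that $\RT(f,\lift f)$ equals $\sum_i(-1)^i$ times the $\rho$-image of the trace of the induced chain map on $C_i(\lift X)$ as a $\Z G$-module map twisted by $\phi$; that is the Reidemeister trace theorem (Wecken; see the survey cited as \cite{geog02}), and it is exactly the tool whose absence in coincidence theory motivates this paper. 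Granting it, your main lemma is right: for $w=a_{i_1}^{\epsilon_1}\cdots a_{i_n}^{\epsilon_n}$ the lifted edge path crosses $g\cdot\lift e_a$ with signed multiplicity equal to the coefficient of $g$ in $\fox{a}w$, the sign bookkeeping at inverse letters matching $\fox{a}(ua^{-1})=\fox{a}u - ua^{-1}$ because a backward traversal of $a$ from the vertex $u$ covers the translate $ua^{-1}\cdot\lift e_a$. By contrast, the route this paper actually takes to Theorem~\ref{fh} is to prove the coincidence formula of Theorem~\ref{formula} first, by a direct geometric computation of local indices for a regular pair, and then specialize to $\psi=\id$, where $\rho(\phi(a)a^{-1})=\rho(1)$ and the extra terms cancel. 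Your approach buys a short, purely algebraic proof in the fixed point case; the paper's approach is much longer but is forced on it (and on anyone wanting the coincidence generalization) precisely because no trace-like formula is available there.
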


Theorem \ref{fh} reduces the calculation of the Reidemeister trace
(and thus of the Nielsen number) in fixed point theory to the
computation of twisted conjugacy classes. Our goal for this paper is
to obtain a similar result in coincidence theory of selfmaps-- a
formula for the coincidence Reidemeister trace $\RT(f,\lift f, g,
\lift g)$ in terms of Fox derivatives which reduces the computation to
twisted conjugacy decisions.

The proof of Theorem \ref{fh} given in \cite{fh83} is brief, 
thanks to a natural trace-like formula for the
Reidemeister trace in fixed point theory. No such formula exists for
the coincidence Reidemeister trace, and this will complicate our
derivation considerably. Our argument is based on first specifying a
particular regular form for maps in Section 3 and for pairs of maps in
Section 4. In Section 5 we give our main result.

The author would like to thank Seungwon Kim,
Philip Heath, and Nirattaya Khamsemanan for helpful and encouraging
conversations, and Robert Brown for helpful comments on the paper.

\section{Preliminaries}
Throughout the paper, let $X$ be a bouquet of circles meeting at
the base point $x_0$. Let $G = \pi_1(X)$, a free group, and let $G_0$ be
the set of generators of $G$. Let $\lift X$ 
be the universal covering space of $X$ with projection $p_X:\lift X
\to X$, and choose once and for all a base point $\lift x_0 \in \lift
X$ with $p_X(\lift x_0) = x_0$.

Given maps $f,g:X \to X$ and their induced homomorphisms $\phi, \psi:G
\to G$, we 
define an equivalence relation on $G$ as follows: two elements
$\alpha, \beta \in G$ are \emph{[doubly] twisted conjugate} if 
\[ \alpha = \phi(\gamma)\beta \psi(\gamma)^{-1}. \]
The equivalence classes with respect to this relation are the
\emph{Reidemeister classes}, and we denote the set of such classes as
$\Reid(\phi,\psi)$. Let $\rho: G \to \Reid(\phi,\psi)$ be the
projection into Reidemeister classes.

For any pair of maps $f,g:X \to X$, denote their coincidence set by
$\Coin(f,g) = \{ x \in X \mid f(x)=g(x) \}$. The set of coincidence
points are partitioned into \emph{coincidence classes}
of the form $\Coin(\alpha^{-1}\lift f, \lift g)$, where $\alpha \in G$
and $\lift f,\lift g:\lift X \to \lift X$ are specified lifts of $f$
and $g$. Lemma 9 of \cite{stae08a} shows that $\Coin(\alpha^{-1}\lift
f, \lift g) = \Coin(\beta^{-1}\lift f, \lift g)$ if and only if
$\rho(\alpha) = \rho(\beta)$, and that $\Coin(\alpha^{-1}\lift f,
\lift g)$ and $\Coin(\beta^{-1}\lift f, \lift g)$ are disjoint if
$\rho(\alpha) \neq \rho(\beta)$. (The statement appears in a slightly
different form in \cite{stae08a} because a slightly different twisted
conjugacy relation is used. Trivial modifications will produce the
version used here.) 

Thus the coincidence classes are represented by Reidemeister classes
in $G$, and so each particular coincidence point has an associated
Reidemeister class. For $x \in \Coin(f,g)$, let $[x_{\lift f, \lift
g}] \in \Reid(\phi,\psi)$ denote the Reidemeister class
$\rho(\alpha)$ for which $x \in p_X(\Coin(\alpha^{-1}\lift f, \lift
g))$.

Let $f,g:X \to X$ be mappings with isolated coincidence points, and
for each coincidence point $x$ let $U_x \subset X$ be a neighborhood
of $x$ containing no other coincidence points. Then we define the
\emph{coincidence Reidemeister trace} as:
\[ \RT(f,\lift f, g, \lift g) = \sum_{x \in \Coin(f,g)}
\ind(f,g,U_x)[x_{\lift f, \lift g}], \]
where $\ind$ denotes the coincidence index. Indeed we can define a
local Reidemeister trace: for any open set $U$, define
\[ \RT(f, \lift f, g, \lift g, U) = \sum_{x \in \Coin(f,g,U)}
\ind(f,g,U_x)[x_{\lift f, \lift g}], \]
where $\Coin(f,g,U) = \Coin(f,g) \cap U$. Clearly this local
Reidemeister trace is equal to the nonlocal version if $U$ is taken
to be $X$, and has the following additivity property: if $V$ and $W$
are disjoint subsets of $U$ with $\Coin(f,g,U) \subset V \cup W$, then
\[ \RT(f,\lift f,g,\lift g, U) = \RT(f,\lift f, g, \lift g, V) +
\RT(f, \lift f, g, \lift g, W). \]

The coincidence index above is well known in the setting of maps from
one orientable manifold to another of the same dimension. In our setting, the
space $X$ is not a manifold, and so this index will in general be
undefined on sets containing the point $x_0$ (or sets whose images under
$f$ and $g$ contain $x_0$). Many of our
results (in particular Lemmas \ref{halfzone}, \ref{classes},
\ref{tophalflemma}, and \ref{bottomhalflemma}) are localized away from
$x_0$, and as such will apply directly to the case where $f, g:X
\to Y$ with $X$ and $Y$ being two different bouquets of circles.

Our restriction to the case of selfmaps allows us to sidestep the
complications at $x_0$ by reducing to the fixed point index, which is
defined for any ANR (and in particular is well known for bouquets of
circles). Obtaining a formula similar 
to our main result for non-selfmaps may require an extension of the
coincidence index to certain non-manifold settings, which is in
general a difficult problem. Gon\c{c}alves in \cite{gonc99}
defines an index for maps from a complex into a manifold of the same
dimension, but it is not integer-valued. No generalization of the
index to maps from one complex to another is known. 

Very little is known about the coincidence Reidemeister trace. The
term ``trace'' is in reference to the construction in fixed point
theory, in which the Reidemeister trace is given by the trace of a
certain matrix (see \cite{geog02}), but no trace-like formula is yet
available in coincidence theory. The above definition suffices to
define the Reidemeister trace only when the coincidence set is
isolated. The main result of \cite{stae08a} shows that a local coincidence
Reidemeister trace can be defined for general maps $f:X \to Y$ (with perhaps
nonisolated coincidence points) if $X$ and $Y$ are
orientable manifolds of the same dimension, and that this local
Reidemeister trace is uniquely characterized by five natural axioms,
among them additivity and homotopy invariance. 

Our setting (where $X$ and $Y$ are the same bouquet of circles) does
not precisely fit the setting of \cite{stae08a} because $X$ is
not a manifold. We will apply certain of the results, however, to
$\RT(f,\lift f, g, \lift g, U)$ in the
case where the neither $U$ nor $f(U)$ and $g(U)$ contain the 
point $x_0$, as in that case $U$ and its images can be
regarded as manifolds in their own right. In particular, for such a
set $U_x$, we will make use of the fact that
\[ \ind(f,g,U_x) = \sign \det(dg_x - df_x), \]
where as above $U_x$ is some subset of $U$ containing only one
coincidence point $x$, and $dg_x$ and $df_x$ denote the derivatives of
$f$ and $g$ (assuming that they exist).

As for defining the Reidemeister trace for any pair of maps $f,g:X \to
X$ (perhaps having nonisolated coincidence points) with lifts $\lift
f,\lift g:\lift X \to \lift X$, we first change $f$ and $g$ by a
homotopy to $f', g'$ so that they have isolated coincidence points
(that this is possible will be a consequence of our 
construction in Theorem \ref{formula}). Now the homotopies of $f,g$ to
$f',g'$ can be lifted to a homotopy of $\lift f, \lift g$ to some
lifts $\lift f', \lift g'$. We then define
\[ \RT(f,\lift f, g, \lift g) = \RT(f', \lift f', g', \lift g'). \]
That this is well defined will be a consequence of the homotopy invariance
of the coincidence index and the homotopy-relatedness of coincidence
classes (see Lemma 14 of \cite{stae08a}).

We will now review the neccessary properties of the Fox calculus (see
e.g.\cite{cf63}). If $\{x_i\}$ are the generators of a free group $G$,
then the operators $\frac{\pd}{\pd x_i}: G \to \Z G$ are defined by:
\begin{align*}
\frac{\pd}{\pd x_i} 1 &= 0, \\
\frac{\pd}{\pd x_i} x_j &= \delta_{ij}, \\
\frac{\pd}{\pd x_i}(uv) &= \frac{\pd}{\pd x_i}u + u\frac{\pd}{\pd
  x_i}v,
\end{align*}
where $\delta_{ij}$ is the Kronecker delta, and $u,v \in G$ are any
words. Two important formulas can be obtained from the
above:
\begin{align*}
\frac{\pd}{\pd x_i}x_i^{-1} &= -x_i^{-1}, \\
\frac{\pd}{\pd x_i}(h_1 \dots h_n) &= \sum_{k=1}^n h_1 \dots h_{k-1}
\frac{\pd}{\pd x_i} h_k.
\end{align*}

\section{A regular form for mappings}
In this section we will describe a standard form for selfmaps of
$X$. Each circle of $X$ is represented by some generator of 
the fundamental group. For each generator $a \in G_0$, let $|a|
\subset X$ be the circle represented by $a$ (including the point
$x_0$).

For simplicity in our notation, we parameterize each circle by the
interval $[0,1]$ with endpoints identified. The circles of $X$ will be 
parameterized so that the base point $x_0$ is identified
with 0 (or equivalently, with 1). For any generator $a \in
G$ and any $x \in [0,1]$, let $[x]_a$ denote the point of $|a|
\subset X$ which has coordinate $x$. Interval-like subsets of $X$ will
be denoted e.g. $(x_1,x_2)_a$ for the subset of points in $|a|$ 
parameterized by the interval $(x_1, x_2)$. 

Homotopy classes of mappings of $X$ are
characterized by their induced mappings on the fundamental groups.
Consider the example where $G = \langle a,b \rangle$ and the mapping
$f:X \to X$ induces $\phi:G \to G$ with 
\[ \phi(a) =  ab^{-1}a^{-1}b^2. \]

Geometrically speaking, the above formula for $\phi$ indicates that $f$
is homotopic to a map which maps some interval $(0, x_1)_a$ bijectively
onto $|a|-x_0$, maps some interval $(x_1, x_2)_a$ bijectively onto
$|b|-x_0$ (in the ``reverse direction''), and so on.

We can represent the action of this map on $|a|$ pictorially as in
Figure \ref{firstpeg}, 
\begin{figure}
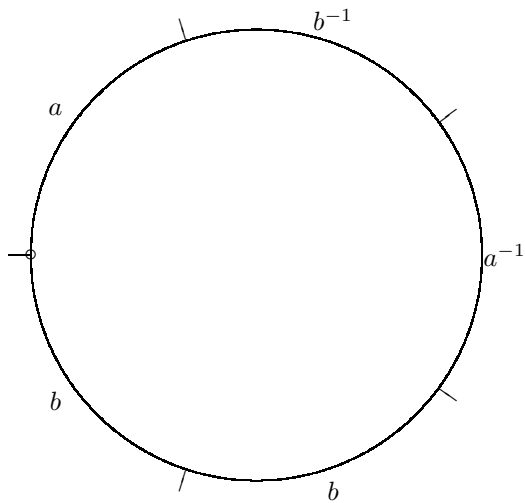

\[
\xy
% xypic code generated from file firstpeg.peg
*\xycircle(30,30){};
(-30, 0)*{\circ};
(-26.69, 19.39)*{a};
(-9.27,28.53)*{}; (-10.19,31.38)*{} **\dir{-};
(10.19, 31.38)*{b^{-1}};
(24.27,17.63)*{}; (26.69,19.39)*{} **\dir{-};
(33, 0)*{a^{-1}};
(24.27,-17.63)*{}; (26.69,-19.39)*{} **\dir{-};
(10.19, -31.38)*{b};
(-9.27,-28.53)*{}; (-10.19,-31.38)*{} **\dir{-};
(-26.69, -19.39)*{b};
(-30,0)*{}; (-33,0)*{} **\dir{-};
\endxy
\]
\caption{Diagram of the action on $|a|$ of a map with
$\phi(a) = ab^{-1}a^{-1}b^2$. The base point $x_0$ is circled at left.}
\label{firstpeg}
\end{figure}
where in this case $x_i = [i/5]_a$, and the label on each interval
indicates that the interval is being mapped bijectively onto the
corresponding circle. Note that sliding the points $x_i$ around the
circle will not change the homotopy class of $f$, provided that $x_0$
does not move, no $x_i$ ever moves across another, and the ordering of
the labels is preserved. 

Thus any map  $f:X \to X$ is homotopic to a map which is
characterized as follows: for each generator $a \in G_0$, specify
$n_a$ intervals $I^a_1, \dots I^a_{n_a}$ together with labels $\{h^a_1,
\dots, h^a_{n_a}\}$, where each of $h^a_i$ are letters of $G$ (a
\emph{letter} of $G$ is an element which is either a generator or the
inverse of a generator of $G$). These intervals will be 
called \emph{intervals of $f$}, and the labels will be called
the \emph{labels of $f$}.

Since we are concerned only with homotopy classes of maps,
we may assume that $f$ maps $(x_i, x_{i+1})_a$ affine
linearly onto $(0,1)_b$ for some generator $b\in G$.
A map specified by intervals and labels which is linear on each 
interval in this way will be called \emph{regular}.

Specifying a map $f$ by intervals and labels gives precise information
which can be used to compute the derivatives of $f$ at any point. For
any interval $I = (x_i, x_{i+1})_a$, define $w(I)$, the \emph{width}
of $I$, as the real number $x_{i+1}-x_i$.
\begin{lem}\label{derivatives}
Let $f:X \to X$ be a regular map, let $I$ be an interval of $f$
labeled by $b \in G$, and let $x$ be any point of $I$. Then we have:
\begin{itemize}
\item If $b$ is a generator of $G$, then 
\[ df_x = \frac1{w(I)} \]
\item If $b$ is the inverse of a generator of $G$, then
\[ df_x = -\frac1{w(I)}, \]
\end{itemize}
where we have abused our notation slightly, writing the derivatives
(typically represented as $1\times 1$ matrices) as real numbers.
\end{lem}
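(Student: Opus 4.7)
The plan is essentially a direct unpacking of the definition of a regular map, followed by an elementary computation of the derivative of an affine linear function.

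First, I would fix the interval $I = (x_i, x_{i+1})_a$ and recall what ``regular'' means: $f$ maps $I$ affine linearly onto $(0,1)_c$, where $c$ is the generator underlying the letter $b$ (so $c = b$ if $b$ is a generator, and $c$ is the generator with $b = c^{-1}$ otherwise). The key interpretive point is that the \emph{letter} $b$ records not just which circle $|c|$ is hit, but also the direction in which $I$ traverses that circle. This is exactly the convention underlying the pictorial description (Figure \ref{firstpeg}) and the identification of the labels $h^a_i$ with letters of $G$: traversing $|c|$ from $[0]_c$ to $[1]_c$ corresponds to the letter $c$, while traversing it from $[1]_c$ to $[0]_c$ corresponds to the letter $c^{-1}$.

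From this, I would write the explicit formulas. In the generator case $b = c$, the restriction $f|_I$ is the affine map
\[ f([x]_a) = \left[\frac{x - x_i}{x_{i+1} - x_i}\right]_c, \]
so using the identification of derivatives with real numbers via our parameterizations, $df_x = 1/(x_{i+1} - x_i) = 1/w(I)$. In the inverse case $b = c^{-1}$, the affine map must instead send $x_i \mapsto [1]_c$ and $x_{i+1} \mapsto [0]_c$, giving
\[ f([x]_a) = \left[\frac{x_{i+1} - x}{x_{i+1} - x_i}\right]_c, \]
and hence $df_x = -1/w(I)$.

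There is no genuine obstacle here: both formulas are immediate once the orientation convention for letters is pinned down, and the derivatives are just slopes of linear maps between unit intervals. The only subtlety worth a remark in the proof is that we are suppressing the chart change between $|a|$ and $|c|$, both parameterized by $[0,1]$; this is the ``slight abuse of notation'' mentioned in the statement, which allows us to record each $1 \times 1$ Jacobian as a signed real number whose sign tracks the orientation encoded by the letter.
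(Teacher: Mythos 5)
Your proposal is correct and follows essentially the same route as the paper: identify $I=(x_i,x_{i+1})_a$, write the affine restriction as $(x-x_i)/w(I)$ in the generator case and $(x_{i+1}-x)/w(I)$ in the inverse case, and read off the derivative. The extra discussion of the orientation convention for letters is a harmless elaboration of what the paper leaves implicit.
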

\begin{proof}
Let $I = (x_i, x_{i+1})_a$. In the case where $b$ is a generator of
$G$, we can compute the 
restriction $f:I \to (0, 1)_b$ as
\[ f(x) = \frac{1}{w(I)} (x -x_i), \]
and the derivative is as desired. In the case where $b$ is the inverse
of a generator, $f$ will reverse orientation, and the restriction will
be
\[ f(x) = \frac{1}{w(I)}(x_{i+1}-x) \]
again giving the desired derivative.
\end{proof}

\section{Regular pairs of mappings}
Let $G_0$ be the set of generators of $G$, and let 
let $\Phi = \{u_a \mid a \in G_0\}$, $\Psi = \{v_a \mid a
\in G_0 \}$ be ordered sets of unreduced words in $G$ of length at least
2. We will construct from these sets two maps $f$ and $g$ in a
standardized way.

We construct $g$ from $\Psi$ as follows, each $v_a$ determining $g$'s
behavior on the circle $|a|$: Let $m_a$ be the length of $v_a$. Let $g$'s first
interval on $|a|$ be $(0, 1/2)_a$, and let the remaining $m_a-1$
intervals be equally spaced over $(1/2, 1)_a$. These $m_a$
intervals are to be labeled respectively by the $m_a$ letters of $v_a$.

We will construct $f$ from $\Phi$ similarly, this time using a single wide
interval on $(1/2,1)_a$ and several evenly spaced intervals on $(0,
1/2)_a$. Our construction of $f$ is slightly complicated by the fact
that we will also require $f$ to be constant in
small neighborhoods of $x_0$ and $[1/2]_a$. On each circle $|a|$, define $f$
as follows: let $n_a$ be the length of $u_a$. 
Fixing some small $\epsilon > 0$, our first
interval in $|a|$ will be $(0, \epsilon)_a$, followed by $n_a-1$
evenly spaced intervals on over $(\epsilon, 1/2-\epsilon)_a$,
followed by the intervals $(1/2 - \epsilon, 1/2 + \epsilon)_a$, $(1/2 +
\epsilon, 1-\epsilon)_a$, and $(1-\epsilon, 1)_a$. These intervals will be 
labeled, respectively, by $1$, the letters of $u_a$ except the
last, 1, the last letter of $u_a$, and 1. (Labeling an interval by 1
indicates that $f$ is constant on that interval.)

\begin{figure}
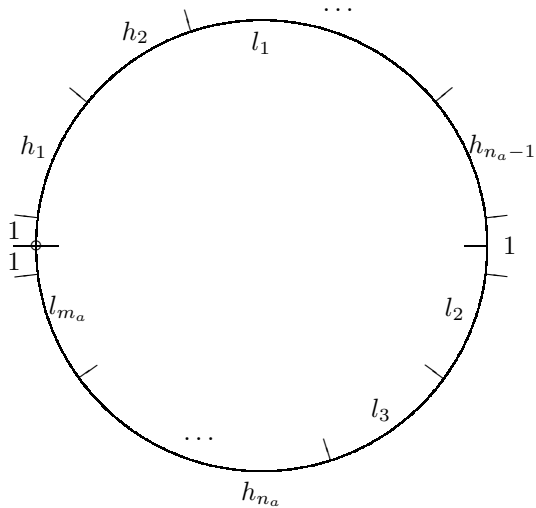

\[
\xy
% xypic code generated from file standardpair.peg
*\xycircle(30,30){};
(-30, 0)*{\circ};
(-30,0)*{}; (-33,0)*{} **\dir{-};
(-32.93, 2.07)*{1};
(-29.76,3.75)*{}; (-32.73,4.13)*{} **\dir{-};
(-30.28, 13.1)*{h_1};
(-23.11,19.12)*{}; (-25.42,21.03)*{} **\dir{-};
(-16.79, 28.4)*{h_2};
(-9.27,28.53)*{}; (-10.19,31.38)*{} **\dir{-};
(10.19, 31.38)*{\dots};
(23.11,19.12)*{}; (25.42,21.03)*{} **\dir{-};
(30.28, 13.1)*{\quad h_{n_a-1}};
(29.76,3.75)*{}; (32.73,4.13)*{} **\dir{-};
(33, 0)*{1};
(29.76,-3.75)*{}; (32.73,-4.13)*{} **\dir{-};
(0, -33)*{h_{n_a}};
(-29.76,-3.75)*{}; (-32.73,-4.13)*{} **\dir{-};
(-32.93, -2.07)*{1};
(-30,0)*{}; (-27,0)*{} **\dir{-};
(0, 27)*{l_1};
(30,0)*{}; (27,0)*{} **\dir{-};
(25.67, -8.34)*{l_2};
(24.27,-17.63)*{}; (21.84,-15.87)*{} **\dir{-};
(15.87, -21.84)*{l_3};
(9.27,-28.53)*{}; (8.34,-25.67)*{} **\dir{-};
(-8.34, -25.67)*{\dots};
(-24.27,-17.63)*{}; (-21.84,-15.87)*{} **\dir{-};
(-25.67, -8.34)*{l_{m_a}};
\endxy
\]
\caption{Typical action of a regular pair on a single circle $|a|$,
  where $u_a = h_1 \dots h_n$ and $v_a = l_1 \dots l_m$. Intervals and
  labels for $f$ are marked on the outside, and intervals and labels
  for $g$ are marked on the inside.\label{standardpair}}
\end{figure}

A diagram of typical mappings constructed above is given in Figure
\ref{standardpair}. We say that the $f$ and $g$ constructed above are
the \emph{regular 
pair given by $\Phi$ and $\Psi$}. Typically we think of our sets
$\Phi$ and $\Psi$ as coming from a pair of homomorphisms $\phi, \psi:G
\to G$, letting 
$\Phi = \{ \phi(a) \mid a \in G_0 \}$ and $\Psi = \{ \psi(a) \mid a
\in G_0 \}$. In such a case we will say that $f$ and $g$ are the
regular pair given by $\phi$ and $\psi$. By our construction it is
clear that any pair of maps are homotopic to the regular
pair given by their induced homomorphisms on the fundamental group.

For example, if $\phi, \psi: G \to G$ are homomorphisms with
$G = \langle a, b, c \rangle$ such that 
\[ \phi(a) = aba^{-1}bc, \quad \psi(a) = c^2ab^{-1}a, \]
then the regular pair $f,g$ given by $\phi$ and $\psi$ is pictured in
Figure \ref{nicepair}.

\begin{figure}
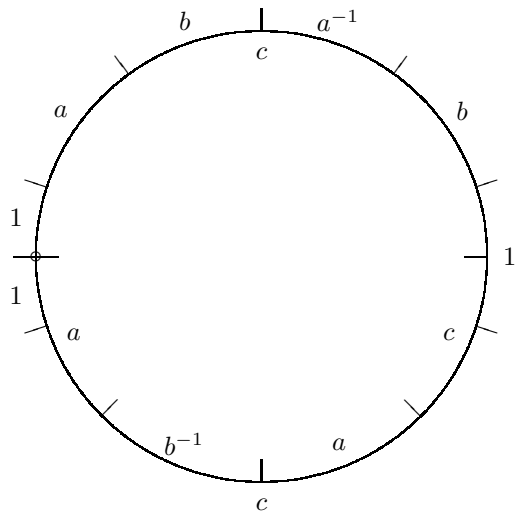

\[
\xy
% xypic code generated from file nicepair.peg
*\xycircle(30,30){};
(-30, 0)*{\circ};
(-30,0)*{}; (-33,0)*{} **\dir{-};
(-32.59, 5.16)*{1};
(-28.53,9.27)*{}; (-31.38,10.19)*{} **\dir{-};
(-26.69, 19.39)*{a};
(-17.63,24.27)*{}; (-19.39,26.69)*{} **\dir{-};
(-10.19, 31.38)*{b};
(0,30)*{}; (0,33)*{} **\dir{-};
(10.19, 31.38)*{a^{-1}};
(17.63,24.27)*{}; (19.39,26.69)*{} **\dir{-};
(26.69, 19.39)*{b};
(28.53,9.27)*{}; (31.38,10.19)*{} **\dir{-};
(33, 0)*{1};
(28.53,-9.27)*{}; (31.38,-10.19)*{} **\dir{-};
(0, -33)*{c};
(-28.53,-9.27)*{}; (-31.38,-10.19)*{} **\dir{-};
(-32.59, -5.16)*{1};
(0, 27)*{c};
(30,0)*{}; (27,0)*{} **\dir{-};
(24.94, -10.33)*{c};
(21.21,-21.21)*{}; (19.09,-19.09)*{} **\dir{-};
(10.33, -24.94)*{a};
(0,-30)*{}; (0,-27)*{} **\dir{-};
(-10.33, -24.94)*{b^{-1}};
(-21.21,-21.21)*{}; (-19.09,-19.09)*{} **\dir{-};
(-24.94, -10.33)*{a};
(-30,0)*{}; (-27,0)*{} **\dir{-};
\endxy
\]
\caption{Diagram for the action on $|a|$ of the regular pair given by
  homomorphisms with $\phi(a)=aba^{-1}bc$ and $\psi(a)=c^2ab^{-1}a$. The
  first map is indicated by intervals and labels on the outside of the
  circle, and the second is indicated on the inside. \label{nicepair}}
\end{figure}

The slight generality obtained by allowing $\Phi$ and $\Psi$ to
include nonreduced words is an important one which we use below.

\section{The Reidemeister trace of a regular pair}
Let $f,g:X \to X$ be a regular pair of maps, and choose
lifts $\lift f$ and $\lift g$ of $f$ and $g$ respectively so that
$\lift f(\lift x_0) = \lift g(\lift x_0)$. In this section we will describe
a method for calculating $\RT(f, \lift f, g, \lift g)$.

Since $f$ and $g$ are linear on intervals which never have the
same endpoints (except at $x_0$), we know that their coincidence set
consists of isolated points. We consider first the points $x_0$ and
$[1/2]_a$, which are coincidence points by construction. 

\begin{lem}\label{zerozone}
Let $f,g:X \to X$ be the regular pair given by $\Phi = \{u_a\}$ and
$\Psi = \{v_a\}$. Let $I_0$ be the union of the point $x_0$ and all intervals of
$f$ having $x_0$ as an endpoint. If $v_a = aw_a a$ for a word $w_a \in
G$, then 
\[ \ind(f,g,I_0) = 1. \]
\end{lem}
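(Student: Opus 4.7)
The plan is to use homotopy invariance of the coincidence index to displace the coincidence point at the wedge vertex $x_0$ to a nearby manifold point of $X$, where the derivative formula $\ind = \sign\det(dg - df)$ applies directly.

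First I would note that by construction $f \equiv x_0$ on $I_0$, since every interval of $I_0$ carries the label $1$. Hence $\Coin(f,g)\cap I_0 = g^{-1}(x_0) \cap I_0$. The hypothesis $v_a = aw_aa$ guarantees that on each initial prong $(0,\epsilon)_a$ and each terminal prong $(1-\epsilon,1)_a$ the map $g$ is the linear label $a$, hence strictly monotone into $|a|$ by Lemma \ref{derivatives}. Consequently $g$ hits $x_0$ only at $x_0$ itself, so $x_0$ is the unique coincidence point of $(f,g)$ in $I_0$.

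Next, fix any generator $a_0 \in G_0$ and a small $\delta > 0$. Construct a homotopy $\{f_t\}$ starting at $f_0 = f$ and ending at a map $f_1$ that is constantly $y_0 := [\delta]_{a_0}$ on $I_0$, interpolating through a small collar of $\partial I_0$ so $f_t$ remains continuous on $X$ and agrees with $f$ outside the collar. The non-coincidence check on $\partial I_0 = \{[\epsilon]_a,[1-\epsilon]_a : a\in G_0\}$ follows from computing $g([\epsilon]_a) = [2\epsilon]_a$ using the leading letter $a$ of $v_a$, which can be separated from $f_t([\epsilon]_a)$ by choosing $\delta < 2\epsilon$ (if $a=a_0$) or by the disjointness of the circles $|a|\neq |a_0|$; an analogous check on the terminal prongs succeeds if $\epsilon$ is taken small relative to the widths of $g$'s final intervals. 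For $f_1$, the coincidence equation $g(x) = y_0$ has solutions only on the initial prong $(0,\epsilon)_{a_0}$, where $g(x) = [2x]_{a_0}$ yields the unique preimage $[\delta/2]_{a_0}$; other prongs of $|a_0|$ miss $y_0$ by the small-$\epsilon$ choice, and prongs of any other circle $|b|$ have image in $|b|\neq|a_0|$. Thus $[\delta/2]_{a_0}$ is the only coincidence of $(f_1,g)$ in $I_0$, and it is a smooth manifold point. Lemma \ref{derivatives} gives $dg = 2$, and $df_1 = 0$, whence the local index is $\sign(2) = +1$. Homotopy invariance yields $\ind(f,g,I_0) = \ind(f_1,g,I_0) = 1$.

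The main obstacle is the careful construction of $f_t$ near $\partial I_0$: one must extend the homotopy continuously across $\partial I_0$ into a collar where it still matches $f$ outside, while verifying that no coincidence escapes through the boundary at any intermediate time $t$. Once the boundary non-coincidence inequalities are arranged by taking $\delta$ small enough relative to $\epsilon$ (and $\epsilon$ small enough relative to the interval widths of $g$), the remaining computation is a single application of the manifold formula on one prong.
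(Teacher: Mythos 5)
Your argument is correct, but it takes a genuinely different route from the paper's. The paper exploits the hypothesis $v_a = aw_aa$ on the other side of the pair: since $g$ carries an initial and a terminal segment of each $|a|$ into $|a|$ preserving $x_0$, it homotopes $g$ to the identity on a small neighborhood $U\subset I_0$, so that $\ind(f,g,I_0)=\ind(f,\id,U)$ becomes the fixed point index of $f$ at the wedge point, which is the classically known value $1$ (here $f$ is constant near $x_0$). You instead keep $g$ fixed, observe that $f\equiv x_0$ on $I_0$ so the only coincidence there is $x_0$ itself, and perturb $f$ to the nearby constant $[\delta]_{a_0}$, sliding the single coincidence off the singular point to $[\delta/2]_{a_0}$ where the manifold formula gives $\sign\det(dg-df)=\sign(2-0)=+1$. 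Your version is more self-contained --- it avoids citing the wedge-point fixed point index --- at the price of the boundary bookkeeping you describe (no coincidences on $\partial I_0$ or in the interpolation collar throughout the homotopy, with $\delta$ small relative to $\epsilon$ and $\epsilon$ small relative to the widths of $g$'s first and last intervals; note the last condition is also needed just to conclude that $x_0$ is the only coincidence in $I_0$, since otherwise a terminal prong could contain an interval endpoint of $g$ mapping to $x_0$). The paper's version is shorter and lands on the fixed point index, which is honestly defined on the ANR $X$, whereas the coincidence index is, as the paper warns, not a priori defined on sets containing $x_0$; but since both arguments ultimately rely on homotopy invariance of the local index over such a set to transfer the computation to well-defined territory, neither is more rigorous than the other on that foundational point, and both yield $\ind(f,g,I_0)=1$.
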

\begin{proof}
Since $v_a$ begins and ends with $a$ and $g(x_0) = x_0$, the map $g$ will map some an
initial segment of $|a|$ to itself, and also will map some terminal
segment of $|a|$ to itself. Thus there is some $U \subset I_0$ on
which $g$ is homotopic to the identity map. 

Thus we have
\[ \ind(f,g,I_0) = \ind(f,\id, U), \]
and this is simply the fixed point index of $f$ at $x_0$. The fixed
point index at the ``wedge point'' of a bouquet of circles is well
studied (see e.g. \cite{wagn99}), and is equal to 1.
\end{proof}

\begin{lem}\label{halfzone}
Let $f, g:X \to X$ be the regular pair given by $\Phi = \{u_a\}$ and $\Psi
= \{v_a\}$. Let $I_a$ be the interval of $f$ containing $[1/2]_a$. If
$v_a = b_a b_a^{-1} w_a$ for some generator $b_a \in G$ and a word
$w_a \in G$, then 
\[ \ind(f,g,I_a) = 0 \]
for any generator $a \in G$.
\end{lem}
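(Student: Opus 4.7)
By construction, the interval $I_a = (1/2 - \epsilon, 1/2 + \epsilon)_a$ is the single interval of $f$ labeled by $1$, so $f$ is constant equal to $x_0$ on $I_a$. Hence a coincidence point of $f$ and $g$ in $I_a$ is precisely a preimage of $x_0$ under $g$. Assuming $\epsilon$ was chosen sufficiently small (specifically $\epsilon < 1/(2(m_a - 1))$, where $m_a$ is the length of $v_a$), the interval $I_a$ lies inside the union of the first two intervals of $g$ on $|a|$, which carry the labels $b_a$ and $b_a^{-1}$ respectively. In particular $g(I_a) \subset |b_a|$, and the only preimage of $x_0$ in $I_a$ is $[1/2]_a$.

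The key geometric observation is that $g$ restricted to $I_a$ is a path in $|b_a|$ realizing the unreduced word $b_a b_a^{-1}$. Explicit computation via Lemma \ref{derivatives} gives $g(1/2 - \epsilon) = [1-2\epsilon]_{b_a}$ and $g(1/2+\epsilon) = [1 - 2\epsilon(m_a - 1)]_{b_a}$, so both endpoints lie in the component of $|b_a| \setminus \{x_0\}$ near parameter $1$. The path runs from $g(1/2-\epsilon)$ up to $x_0$ and back down to $g(1/2+\epsilon)$; since the endpoints are on the same side of $x_0$, this path is homotopic rel endpoints to one lying entirely in $|b_a| \setminus \{x_0\}$. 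Extending by $g$ itself outside $I_a$ produces a homotopy from $g$ to some $g'$ with $g'(I_a) \cap \{x_0\} = \emptyset$.

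Throughout the homotopy the values $g_t(1/2 \pm \epsilon) = g(1/2 \pm \epsilon)$ are fixed and differ from $x_0 = f(1/2 \pm \epsilon)$, so no coincidences cross $\partial I_a$. By homotopy invariance of the coincidence index,
\[ \ind(f,g,I_a) = \ind(f,g',I_a) = 0, \]
since $\Coin(f,g',I_a) = \emptyset$.

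The main technical point I expect to need care is the index-theoretic formalism: $f(I_a) = \{x_0\}$ contains the wedge point, and $g$ is not even smooth at $[1/2]_a$, so the manifold formula $\sign \det(dg - df)$ is not directly available at this coincidence. Routing the argument through a homotopy of $g$ that eliminates the coincidence altogether sidesteps this issue, since the only index properties invoked are homotopy invariance and the vanishing of the index on coincidence-free sets, both of which hold in the ANR setting for selfmaps discussed in the preliminaries.
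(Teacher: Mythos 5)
Your proposal is correct and follows essentially the same route as the paper: the paper also observes that the two intervals of $g$ adjacent to $[1/2]_a$ carry inverse labels, homotopes $g$ near $[1/2]_a$ so that its image there misses $x_0$ (the paper makes it constant at some $x \in |b_a| - x_0$), and concludes the index vanishes because the resulting map is coincidence-free with $f$, which is constantly $x_0$ on $I_a$. Your version merely adds explicit endpoint computations and the boundary-coincidence check, which are harmless refinements of the same argument.
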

\begin{proof}
Since the intervals of $g$ on either side of $[1/2]_a$ are labeled
by inverse elements, our map $g$ can be changed by a homotopy in a
neighborhood of $[1/2]_a$ to be constant, with constant value some $x
\in |b_a|-x_0$. But $f$ will have constant value $x_0$ on $I_a$, and so
will be coincidence free with the resulting map homotopic to
$g$. Thus the index on $I_a$ is zero as desired.
\end{proof}

Now we turn to the coincidence points other than $x_0$ and $[1/2]_a$.
\begin{lem}\label{classes}
If $f, g$ is the regular pair given by $\Phi = \{ u_a \}$ and $\Psi =
\{ v_a \}$ 
and $x\in |a|$ is a coincidence point other than
$x_0$ or $[1/2]_a$ lying in an interval labeled $h_i$ by $f$ and $l_j$ by $g$, then
\[ [x_{\lift f, \lift g}] = \begin{cases} 
\rho(h_1 \dots h_{i-1} (l_1 \dots l_{j-1})^{-1}) &\text{if $h_i = l_j$,}
\\
\rho(h_1 \dots h_{i-1}(l_1
  \dots l_j)^{-1}) &\text{if $h_i = l_j^{-1}$,} \end{cases}
\]
where 
\[ u_a = h_1 \dots h_{n_a}, \quad v_a = l_1 \dots l_{m_a}. \]
\end{lem}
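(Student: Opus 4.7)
The plan is to reduce the Reidemeister class at a coincidence point $x$ to the homotopy class of a loop in $X$. Choose the lifts so that $\lift f(\lift x_0) = \lift g(\lift x_0) = \lift x_0$, which is possible because $f(x_0) = g(x_0) = x_0$ by construction of the regular pair. I claim that for any path $\sigma$ from $x_0$ to $x$, the loop $\gamma_\sigma := (f\sigma)\cdot(g\sigma)^{-1}$ based at $x_0$ represents, under the standard identification $\pi_1(X,x_0) \cong G$, precisely the element $\alpha$ with $x \in p_X(\Coin(\alpha^{-1}\lift f, \lift g))$. To verify, lift $\sigma$ to $\lift\sigma$ at $\lift x_0$ with endpoint $\lift x$; the defining condition rewrites as $\lift f(\lift x) = \alpha\lift g(\lift x)$, so the lift of $\gamma_\sigma$ at $\lift x_0$ follows $\lift f\circ\lift\sigma$ to $\lift f(\lift x)=\alpha\lift g(\lift x)$, then continues as the reverse of $\alpha\cdot(\lift g\circ\lift\sigma)$ to arrive at $\alpha\lift x_0$. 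This identification is the main conceptual step; everything afterward is a direct computation.

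I would take $\sigma$ to be the monotone arc $t\mapsto[tc]_a$ from $x_0$ to $x=[c]_a$ in $|a|$ and read $\gamma_\sigma$ off the construction. Since the $1$-labeled buffer intervals of $f$ (around $x_0$ and $[1/2]_a$) are constant at $x_0$, traversing $\sigma$ through $f$ produces the edge path $h_1\cdots h_{i-1}$ in full, followed by a partial traversal of the letter circle carrying $h_i$ and ending at $f(x)$. An identical analysis for $g$ yields $l_1\cdots l_{j-1}$ in full, followed by a partial traversal of the letter circle carrying $l_j$ and ending at $g(x)=f(x)$. The two partial arcs necessarily share a letter circle, so either $h_i = l_j$ or $h_i = l_j^{-1}$.

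The loop $\gamma_\sigma$ therefore assembles as $h_1\cdots h_{i-1}\cdot(\text{middle})\cdot(l_1\cdots l_{j-1})^{-1}$, where the middle is the composition of $f$'s partial arc with the reverse of $g$'s. When $h_i = l_j$ the two partial arcs coincide as parameterized paths and cancel, yielding the first formula. When $h_i = l_j^{-1}$ the two partial arcs traverse the common circle in opposite directions from $x_0$ to the shared endpoint $f(x)$; by the linearity of $f$ and $g$ on their intervals, they cover complementary portions of that circle, so composing $f$'s arc with the reverse of $g$'s traces the entire circle once in the $h_i$-direction and inserts one extra letter $h_i$ in the middle. Using $h_i = l_j^{-1}$ to absorb this factor into the adjacent block rewrites $h_1\cdots h_i(l_1\cdots l_{j-1})^{-1}$ as $h_1\cdots h_{i-1}(l_1\cdots l_j)^{-1}$, the second formula.
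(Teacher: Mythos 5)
Your proof is correct and follows essentially the same route as the paper's: both determine the deck transformation relating $\lift f$ and $\lift g$ at a lift of $x$ by lifting arcs from the base point and reading off the edge paths $h_1 \dots h_{i-1}$ and $l_1 \dots l_{j-1}$, with the same two-case split according to whether $h_i = l_j$ or $h_i = l_j^{-1}$. The only cosmetic difference is that the paper anchors the computation at the interval endpoints $x_i$ and $z_j$ (where the partial arcs degenerate and the matching of endpoints decides the case), while you anchor at $x$ itself via the loop $(f\sigma)\cdot(g\sigma)^{-1}$ and argue that the two partial arcs either cancel or concatenate to a full traversal of the circle labeled by $h_i$.
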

\begin{proof}
Suppose that $x$ lies in the intervals $I = (x_i, x_{i+1})_a$ and $J =
(z_j, z_{j+1})_a$ of $f$ and $g$ respectively.
Since $x$ is a coincidence point, we know that $f(I) = g(J)$, which means
that either $h_i = l_j$ or $h_i = l_j^{-1}$, and thus our cases in the
statement of the Lemma are exhaustive.

Define a path by a positively oriented arc from $x_0$ to
$x_i$, and lift this path to $\lift X$, starting at the initial point
$\lift x_0$. Define $\lift x_i$ as the terminal point of this path,
and we have 
\[ \lift f(\lift x_i) = \alpha\lift y_0, \]
where $\alpha$ is the covering transformation corresponding to
the word $h_1 \dots h_{i-1} \in G$, and $\lift y_0 = \lift f(\lift
x_0) = \lift g(\lift x_0)$. Similarly defining $\lift z_j$, we have 
\[ \lift g(\lift z_j) = \beta \lift y_0, \]
where $\beta$ is the covering transformation corresponding to the word
$l_1 \dots l_{j-1} \in G$. Thus we have
\[ \lift g(\lift z_j) = \beta \alpha^{-1} \lift f(\lift x_i). \]

In the case where $h_i = l_j$, we have $f(x_i) = g(z_j)$ and
$f(x_{i+1}) = g(z_{j+1})$. Thus any group element $\sigma \in
G$ with $x \in p_X (\Coin(\sigma^{-1} \lift f, \lift g))$ must have
$\sigma^{-1} \lift f(\lift x_i) = \lift g(\lift z_j)$. Then by the
above formula we have $\sigma^{-1} = \beta \alpha^{-1}$, and thus that
$[x_{\lift f, \lift g}] = \rho(h_1 \dots h_{i-1}(l_1 \dots l_{j-1})^{-1})$
as desired. 

Now we turn to the case where $h_i = l_j^{-1}$, in which
$f(x_i) = g(z_{j+1})$ and $f(x_{i+1}) = g(z_j)$. Similar to $\lift z_j$ above, we can define $\lift z_{j+1}$ as the
terminal point of a lifted path from $x_0$ to $z_{j+1}$, and we have
\[ \lift g(\lift z_{j+1}) = \gamma \alpha^{-1}
\lift f(\lift x_i), \]
where $\gamma$ is the covering transformation corresponding to $l_1
\dots l_j \in G$. Thus any group
element $\sigma$ with $x \in p_X(\Coin(\sigma^{-1}\lift f, \lift g))$ must
have $\sigma^{-1} \lift f(\lift x_i) = \lift g(\lift z_{j+1})$, and
we have $\sigma^{-1} = \gamma \alpha^{-1}$, and
thus that $[x_{\lift f, \lift g}] = \rho(h_1 \dots h_{i-1}(l_1 \dots
l_j)^{-1})$ as desired. 
\end{proof}

\begin{lem}\label{tophalflemma}
Let $f$ and $g$ be the regular pair given by $\Phi = \{u_a\}$ and
$\Psi = \{v_a\}$, with $v_a$ having a generator as its first letter,
and let $I$ be an interval of $f$ with $I \subset (0,1/2)_a$ 
for some circle $|a| \subset X$. Then
\[ \RT(f, \lift f, g, \lift g, I) = -\rho( h_1 \dots
h_{i-1} \frac{\pd}{\pd l_1} h_i), \]
where $u_a = h_1 \dots h_n$, the interval $I$ is labeled by $h_i$, and
$l_1$ is the first letter of $v_a$.
\end{lem}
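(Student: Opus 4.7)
The approach I would take is to locate every coincidence point inside $I$ directly and compute its index from the linear structure of a regular pair. The key observation is that on all of $(0,1/2)_a$ the map $g$ is affine linear onto $|l_1|$ with width $1/2$; since $l_1$ is a generator, Lemma \ref{derivatives} gives $g'(s) = +2$, so in coordinates $g(s) = [2s]_{l_1}$ for every $s \in I$. Thus coincidences in $I$ can only occur when $f(I)$ and $g(I)$ lie on the same circle, which forces $h_i \in \{l_1, l_1^{-1}\}$.

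I would then split into three cases according to $h_i$. When $h_i$ is neither $l_1$ nor $l_1^{-1}$, the images $f(I)\subset |h_i|\setminus\{x_0\}$ and $g(I)\subset |l_1|\setminus\{x_0\}$ lie on distinct edges of $X$ meeting only at $x_0$, which neither image reaches. Hence $\Coin(f,g)\cap I = \emptyset$ and $\RT(f,\lift f, g, \lift g, I) = 0$, matching the claim because $\frac{\pd}{\pd l_1} h_i = 0$ in this case.

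When $h_i = l_1$, Lemma \ref{derivatives} gives $f'(s) = 1/w(I)$. Solving $f(s)=g(s)$ in $|l_1|$-coordinates produces exactly one solution $x^\ast \in I$, and its coincidence index is $\sign(2 - 1/w(I)) = -1$ since $w(I) < 1/2$. Applying Lemma \ref{classes} with $j=1$ (so that $l_1\cdots l_{j-1}$ is empty), the Reidemeister class of $x^\ast$ is $\rho(h_1\cdots h_{i-1})$, and one obtains
\[ \RT(f,\lift f,g,\lift g, I) = -\rho(h_1\cdots h_{i-1}), \]
which is the stated formula because $\frac{\pd}{\pd l_1} l_1 = 1$. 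The case $h_i = l_1^{-1}$ is symmetric: now $f'(s) = -1/w(I)$, there is still a unique coincidence $x^\ast$, but its index is $\sign(2 + 1/w(I)) = +1$, and Lemma \ref{classes} gives Reidemeister class $\rho(h_1\cdots h_{i-1} l_1^{-1})$.

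No single step is genuinely difficult; the whole argument is direct case analysis. The one place where care is required is the sign bookkeeping: the positive index in the $h_i = l_1^{-1}$ case must combine with the overall negative sign in the formula to reproduce $+\rho(h_1\cdots h_{i-1} l_1^{-1})$, and this works out precisely because $\frac{\pd}{\pd l_1} l_1^{-1} = -l_1^{-1}$. Thus the Fox derivative $\frac{\pd}{\pd l_1} h_i$ uniformly encodes both the presence/absence of a coincidence and the correct index sign, and all three cases assemble into the single formula of the lemma.
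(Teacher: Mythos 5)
Your proposal is correct and follows essentially the same route as the paper: the same three-way case split on the relation between $h_i$ and $l_1$, the same derivative computation $dg_x=2$, $|df_x|=1/w(I)>2$ giving the index via $\sign(dg_x-df_x)$, and the same appeal to Lemma \ref{classes} with $j=1$ to identify the Reidemeister class. The sign bookkeeping in the $h_i=l_1^{-1}$ case matches the paper's computation exactly.
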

\begin{proof}
Since $I$ contains at most one coincidence point (both $f$ and $g$ are
affine linear on $I$), we know that $\RT(f,\lift f, g, \lift g, I)$ is
zero if there is no coincidence point in $I$, and otherwise
\[ \RT(f, \lift f, g, \lift g, I) = \ind(f,g,I) [x_{\lift f, \lift
	g}], \]
where $x$ is the unique coincidence point in $I$.

We prove the lemma in three cases, treating the various possible
relationships between $h_i$ and $l_1$. Either: $h_i$ is not $l_1$ or
its inverse, or they are equal, or they are inverses.

In the first case, $h_i \neq l_1^{\pm 1}$, and so the maps $f$ and $g$
have no coincidence point in $I$. Thus 
\[ \RT(f,\lift f, g, \lift g, I) = 0 = -\rho(h_1 \dots h_{i-1} \frac{\pd}{\pd  l_1}h_i). \]

In the second case, $h_i = l_1$, and so $\frac{\pd}{\pd l_1} h_i =
1$. Thus we must show that 
\[ \RT(f,\lift f, g, \lift g, I) = -\rho(h_1 \dots h_{i-1}). \]
Since $h_i = l_1$, the maps $f$ and $g$ have 
one coincidence point $x$ in $I$. By our calculation in Lemma
\ref{derivatives}, we know that $df_x > 2$, since the
width of the interval of $f$ containing $x$ is less than 1/2. Since
$g$'s interval containing $x$ is of width exactly $1/2$, we know that
$dg_x = 2$, and thus  
\[ \ind(f,g,I) = \sign(\det(dg_x - df_x)) = -1. \]
By Lemma \ref{classes} we have that $[x_{\lift f, \lift g}] = \rho(h_1
\dots h_{i-1})$. Thus we have 
\[ \RT(f,\lift f, g, \lift g, I) = \ind(f,g,I) [x_{\lift
	f, \lift g}] = -\rho(h_1 \dots h_{i-1}) \]
as desired.

In the third case we assume $h_i = l_1^{-1}$. In this case, we have
$\frac{\pd}{\pd l_1}h_i = -h_i$, and so we must show that 
\[ \RT(f, \lift f, g, \lift g, I) = \rho(h_1 \dots
h_i). \]
As above, we have one coincidence
point $x$, but this time the derivatives are: $df_x < -2$ and 
$dg_x = 2$ as before. Thus $\ind(f,g,I) = 1$. By Lemma \ref{classes}
we have $[x_{\lift f, \lift g}] = \rho(h_1 \dots h_i)$, and thus that
\[ \RT(f, \lift f, g, \lift g, I) = \ind(f, g,
x)[x_{\lift f, \lift g}] = \rho(h_1 \dots h_i)\] 
as desired.
\end{proof}

\begin{lem} \label{bottomhalflemma}
With notation as in Lemma \ref{tophalflemma},
let $I$ be some interval of $g$ with $J \subset (1/2, 1)_a$ for some
circle $|a|\subset X$, and assume that $h_n$ is a generator of $G$. Then
\[ \RT(f, \lift f, g, \lift g, J) = \rho\left( h_1 \dots h_{n-1} (l_1
\dots l_{i-1} \frac{\pd}{\pd h_n}l_i)^{-1}\right), \]
where $v_a = l_1 \dots l_m$, and the interval $J$ is labeled by $l_i$.
\end{lem}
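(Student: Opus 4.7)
The plan is to mirror the proof of Lemma \ref{tophalflemma} with the roles of $f$ and $g$ exchanged. In the setting of that earlier lemma, $f$ carried the narrow intervals while $g$ had a single wide interval on $(0,1/2)_a$; here the situation is reversed: the intervals of $g$ on $(1/2,1)_a$ are narrow (each of width $1/(2(m_a-1))$), and $f$ carries the single wide interval $(1/2+\epsilon,1-\epsilon)_a$ of width $1/2-2\epsilon$ labeled by $h_n$. As before, affine linearity ensures that $J$ contains at most one coincidence point $x$, so $\RT(f,\lift f,g,\lift g,J) = \ind(f,g,J)[x_{\lift f,\lift g}]$ when a coincidence exists and vanishes otherwise.

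I would then split into three subcases based on whether $h_n \neq l_i^{\pm 1}$, $h_n = l_i$, or $h_n = l_i^{-1}$. In the first subcase there is no coincidence in $J$, and on the right-hand side $\frac{\pd}{\pd h_n}l_i = 0$, so both sides are zero. In the subcase $h_n = l_i$ the Fox derivative is $1$, so the right-hand side becomes $\rho(h_1 \dots h_{n-1}(l_1 \dots l_{i-1})^{-1})$; by Lemma \ref{classes} this is exactly $[x_{\lift f,\lift g}]$. By Lemma \ref{derivatives}, both derivatives are positive with $dg_x = 2(m_a-1)$ and $df_x = 1/(1/2-2\epsilon)$, and the construction gives $dg_x > df_x$, so $\ind(f,g,J) = +1$ as required. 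In the subcase $h_n = l_i^{-1}$, the Fox calculation gives $\frac{\pd}{\pd h_n}l_i = -l_i$, so $l_1 \dots l_{i-1}\frac{\pd}{\pd h_n}l_i = -l_1 \dots l_i$, and the right-hand side becomes $-\rho(h_1 \dots h_{n-1}(l_1 \dots l_i)^{-1})$. By Lemma \ref{classes} this class is $-[x_{\lift f,\lift g}]$, and since now $dg_x < 0 < df_x$ one has $\ind(f,g,J) = -1$, again matching the formula.

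The main obstacle is verifying the sign of $\ind(f,g,J)$ in the subcase $h_n = l_i$. In Lemma \ref{tophalflemma} the narrow intervals belong to $f$, so $|df_x|$ automatically exceeds $|dg_x| = 2$, forcing $\ind = -1$; here the narrow intervals belong to $g$, so one needs $|dg_x| > |df_x|$, which must be extracted from the widths $1/(2(m_a-1))$ for $g$'s narrow intervals versus $1/2 - 2\epsilon$ for $f$'s wide one. The opposite sign of the index in these two lemmas is precisely what produces the change of sign between the two formulas. The remainder of the argument is routine application of Lemma \ref{classes} to identify the Reidemeister class of $x$ together with the Fox-calculus rules $\frac{\pd}{\pd h_n}h_n = 1$ and $\frac{\pd}{\pd h_n}h_n^{-1} = -h_n^{-1}$.
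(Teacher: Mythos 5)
Your proposal is correct and follows essentially the same route as the paper: the same three-case split on the relationship between $l_i$ and $h_n$, the same identification of the Reidemeister class via Lemma \ref{classes}, and the same index computation comparing the slope of $g$ on its narrow interval (of width $1/(2(m_a-1))$) against the slope of $f$ on its wide interval (of width $1/2-2\epsilon$), which is exactly where the sign flip relative to Lemma \ref{tophalflemma} comes from. The paper phrases the derivative comparison as $df_x = 2+\epsilon$ versus $dg_x>2$, but this is the same estimate you give in terms of the interval widths.
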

\begin{proof}
The proof is very similar to that of Lemma \ref{tophalflemma}. Again
we split the argument into three cases: either $l_i$ is not $h_n$ or
its inverse, or $l_i = h_n$, or $l_i = h_n^{-1}$.

In the first case, $l_i \neq h_n^{\pm 1}$ and so $f$ and $g$ have no
coincidences in $J$. Thus we have
\[ \RT(f, \lift f, g, \lift g, J) = 0 = \rho\left( h_1 \dots h_{n-1} (l_1
\dots l_{i-1} \frac{\pd}{\pd h_n}l_i)^{-1}\right), \]
as desired.

In the second case, $l_i = h_n$, and so $\frac{\pd}{\pd h_n} l_i =
1$, and we must show
\[ \RT(f, \lift f, g, \lift g, J) = \rho(h_1 \dots h_{n-1}(l_1 \dots
l_{i-1})^{-1} ). \]
In this case $f$ and $g$ have a single coincidence point $x\in
J$. As in our arguments for Lemma \ref{tophalflemma} we
can compute that $df_x = 2+\epsilon$ for some arbitrarily small
$\epsilon$, and $dg_x > 2$, and thus that $\ind(f,g,J)
= 1$. By Lemma \ref{classes} we have
$[x_{\lift f, \lift g}] = \rho(h_1 \dots h_{n-1}(l_1 \dots l_{i-1})^{-1})$. Thus
\[ \RT(f, \lift f, g, \lift g, J) =
\ind(f,g,J)[x_{\lift f, \lift g}] = \rho(h_1 \dots h_{n-1}(l_1 \dots
l_{i-1})^{-1}) \]
as desired.

For the third case we have $l_i = h_n^{-1}$, and so $\frac{\pd}{\pd
  h_n} l_i = -l_i$, and we must show 
\[ \RT(f, \lift f, g, \lift g, J) = - \rho(h_1 \dots h_{n-1}(l_1
  \dots l_i)^{-1} ). \]
As usual we compute derivatives and find that
$df_x = 2+\epsilon$ and $dg_x < -2$ and thus that $\ind(f,g,J) = -1$.
Lemma \ref{classes} gives $[x_{\lift f, \lift g}] = \rho(h_1 \dots
h_{n-1}(l_1 \dots l_i)^{-1})$ as desired.
\end{proof}

Let $i:\Z G \to \Z G$ be the involution defined by
\[ i\left(\sum_k c_kg_k\right) = \sum_k c_k g_k^{-1}. \]

\begin{thm}\label{formula}
Let $f, g: X \to X$ be maps which induce the homomorphisms $\phi, \psi:
G \to G$. Then there are lifts $\lift f$ and $\lift g$ such
that for any particular generator $b \in G$, we have
\[ \RT(f, \lift f, g, \lift g) = \rho\left( 1 - \sum_{a \in G_0} 
\frac\pd{\pd a}\phi(a) + \phi(a)\psi(a)^{-1} - \phi(a)a^{-1}
i(\frac{\pd}{\pd a} \psi(a)) \right). \]
\end{thm}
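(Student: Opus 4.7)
The plan is to represent $f$ and $g$ by a regular pair whose defining words are chosen so that all four preceding lemmas apply. For each generator $a\in G_0$, set the unreduced words $u_a=\phi(a)\cdot a^{-1}\cdot a$ and $v_a=a\cdot a^{-1}\cdot\psi(a)\cdot a^{-1}\cdot a$. As group elements these still equal $\phi(a)$ and $\psi(a)$, so by homotopy invariance we may replace the original maps by the regular pair built from $\{u_a\}$ and $\{v_a\}$, and we choose lifts with $\lift f(\lift x_0)=\lift g(\lift x_0)$. The form is engineered so that $v_a$ begins with $aa^{-1}$ and ends with $a$ (enabling Lemmas \ref{zerozone} and \ref{halfzone}), the last letter of $u_a$ is the generator $a=h_{n_a}$ (enabling Lemma \ref{bottomhalflemma}), and the first letter of $v_a$ is the generator $a=l_1$ (for Lemma \ref{tophalflemma}).

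Next I partition $X$ into $I_0$, the intervals $I_a$ around each $[1/2]_a$, the $n_a-1$ intervals of $f$ in $(0,1/2)_a$, and the $m_a-1$ intervals of $g$ in $(1/2,1)_a$. By additivity of the local Reidemeister trace, $\RT(f,\lift f,g,\lift g)$ is the sum of the contributions from these pieces: Lemma \ref{zerozone} contributes $\rho(1)$ from $I_0$, and Lemma \ref{halfzone} contributes $0$ from each $I_a$.

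For the top-half contribution I sum Lemma \ref{tophalflemma} over $i=1,\ldots,n_a-1$. The full Fox product rule sum $\sum_{i=1}^{n_a}h_1\cdots h_{i-1}\frac{\pd h_i}{\pd a}$ equals $\frac{\pd u_a}{\pd a}=\frac{\pd\phi(a)}{\pd a}$, since the trailing $a^{-1}a$ contributes zero; removing the absent $i=n_a$ summand then subtracts $h_1\cdots h_{n_a-1}=\phi(a)a^{-1}$. Applying $-\rho$ yields top-half total $-\rho(\frac{\pd\phi(a)}{\pd a})+\rho(\phi(a)a^{-1})$. An entirely analogous telescoping for Lemma \ref{bottomhalflemma}, over $j=2,\ldots,m_a$ and with the involution $i$, produces naive bottom-half total $\rho(\phi(a)a^{-1}\,i(\frac{\pd\psi(a)}{\pd a}))-\rho(\phi(a)a^{-1})$.

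The crux, and the main obstacle, is an additional correction at the rightmost $g$-interval on each circle. Its formal Lemma \ref{bottomhalflemma} contribution evaluates to $\rho(h_1\cdots h_{n-1}(l_1\cdots l_{m-1})^{-1})=\rho(\phi(a)\psi(a)^{-1})$, but in the present geometry the coincidence giving that contribution is pushed precisely to $x_0$ and is already accounted for by the index $1$ of Lemma \ref{zerozone}; the bottom-half sum must therefore be reduced by $\rho(\phi(a)\psi(a)^{-1})$ per generator. Summing the four corrected contributions over $a\in G_0$, the $\pm\rho(\phi(a)a^{-1})$ terms cancel, and the expression rearranges exactly into the claimed formula.
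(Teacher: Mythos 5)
Your proposal is correct and follows essentially the same route as the paper: the same padded words $u_a=\phi(a)a^{-1}a$ and $v_a=aa^{-1}\psi(a)a^{-1}a$, the same decomposition into $I_0$, the $I_a$, and the remaining $f$- and $g$-intervals, and the same telescoping of Lemmas \ref{tophalflemma} and \ref{bottomhalflemma} with cancellation of the $\pm\rho(\phi(a)a^{-1})$ terms. The only difference is at the last $g$-interval on each circle, where the paper isolates $K^a_4 = \mathrm{int}(U^a - I_0)$ and shows directly that it is coincidence-free (so its contribution is $0$ rather than the formal $\rho(\phi(a)\psi(a)^{-1})$), which is a slightly more careful version of your observation that the would-be coincidence degenerates to $x_0$ and is absorbed into Lemma \ref{zerozone}.
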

\begin{proof}
For each circle $|a| \subset X$, define unreduced words $u_{a} =
\phi(a)a^{-1}a$ and $v_{a} = aa^{-1}\psi(a)a^{-1}a$. Now let
$f', g': X \to X$ be the regular pair given by $\Phi = \{ u_a \}$
and $\Psi = \{ v_a\}$ which is homotopic to the
original pair $f,g$. By construction this regular pair satisfies the
hypotheses of Lemmas \ref{zerozone}, \ref{halfzone},
\ref{tophalflemma}, and \ref{bottomhalflemma}.
Let $\lift f$ and $\lift g$ be the lifts of $f$
and $g$ which fix $\lift x_0$. Now lifting the homotopies of $f$ and $g$ to
$f'$ and $g'$ gives lifts $\lift f'$ and $\lift g'$ with
\[ \RT(f,\lift f, g, \lift g) = \RT(f', \lift f', g', \lift g'). \]
Write $\phi(a) =
h^a_1 \dots h^a_{n_a}$ and $\psi(a) = l^a_1 \dots l^a_{m_a}$.
A diagram of the action of $f'$ and $g'$ on a typical circle $|a|$ is
given in Figure \ref{proofpegs}. 

\begin{figure}
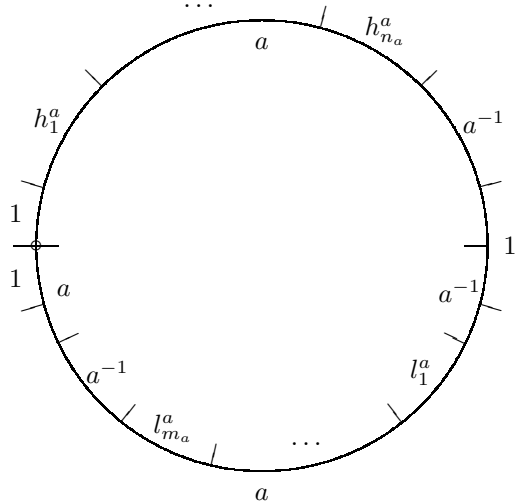

\[
\xy
% xypic code generated from file proofpegs.peg
*\xycircle(30,30){};
(-30, 0)*{\circ};
(-30,0)*{}; (-33,0)*{} **\dir{-};
(-32.71, 4.34)*{1};
(-28.96,7.82)*{}; (-31.85,8.6)*{} **\dir{-};
(-28.5, 16.61)*{h^a_1};
(-21.21,21.21)*{}; (-23.33,23.33)*{} **\dir{-};
(-8.4, 31.91)*{\dots};
(7.82,28.96)*{}; (8.6,31.85)*{} **\dir{-};
(16.44, 28.61)*{h^a_{n_a}};
(21.21,21.21)*{}; (23.33,23.33)*{} **\dir{-};
(28.61, 16.44)*{\,\,\,a^{-1}};
(28.96,7.82)*{}; (31.85,8.6)*{} **\dir{-};
(33, 0)*{1};
(28.96,-7.82)*{}; (31.85,-8.6)*{} **\dir{-};
(0, -33)*{a};
(-28.96,-7.82)*{}; (-31.85,-8.6)*{} **\dir{-};
(-32.71, -4.34)*{1};
(-30,0)*{}; (-27,0)*{} **\dir{-};
(0, 27)*{a};
(30,0)*{}; (27,0)*{} **\dir{-};
(26.31, -6.05)*{a^{-1}};
(27.06,-12.94)*{}; (24.35,-11.64)*{} **\dir{-};
(21.12, -16.81)*{l^a_1};
(18.68,-23.47)*{}; (16.81,-21.12)*{} **\dir{-};
(5.88, -26.34)*{\dots};
(-6.72,-29.23)*{}; (-6.05,-26.31)*{} **\dir{-};
(-11.8, -24.28)*{l^a_{m_a}};
(-18.68,-23.47)*{}; (-16.81,-21.12)*{} **\dir{-};
(-21.12, -16.81)*{\,\,a^{-1}};
(-27.06,-12.94)*{}; (-24.35,-11.64)*{} **\dir{-};
(-26.34, -5.88)*{a};
\endxy
\]
\caption{Diagram of the action of $f'$ and $g'$ on $|a|$ in the proof of Theorem
  \ref{formula}. \label{proofpegs}}
\end{figure}

In order to compute the Reidemeister trace, we partition $X$ into
several intervals: let $I^a_1, \dots, I^a_{n_a}$ be the intervals of $f'$
in $(0,1/2)_a$ labeled by $h^a_1, \dots, h^a_{n_a}$, and let $J^a_1, \dots,
J^a_{m_a}$ be the intervals of $g'$ in $(1/2, 1)_a$ labeled by $l^a_1,
\dots, l^a_{m_a}$. Let $I_0$ and $I_a$ be as in Lemmas
\ref{zerozone} and \ref{halfzone}. 

We define four more intervals to cover the remaining
portions of $X$: let $K^a_1$ be the interval of $f$ in $(0,1/2)_a$
which follows $I^a_{n_a}$ and is labeled by $a^{-1}$. Let $V^a$ be
the first interval of $g$ in $(1/2,1)_a$ (this interval is labeled by
$a^{-1}$) and we set $K^a_2$ to be the interior of $V^a - I_a$. Let
$K^a_3$ be the interval of $g$ following $J^a_{m_a}$, this interval is
labeled $a^{-1}$ by $g$. Finally let $U^a$ be the last interval of $g$, and
let $K^a_4$ be the interior of $U^a - I_0$. Since our regular maps are
constructed to have no coincidences at interval endpoints except for
$x_0$ and the $[1/2]_a$, we have
\[ \Coin(f',g') \subset I_0 \cup \bigcup_{a \in G_0}
\left(I_a \cup \bigcup_{i=1}^{n_a}I^a_i \cup 
\bigcup_{j=1}^{m_a} J^a_j \cup \bigcup_{k=1}^4 K^a_k
\right), \]
and so we can compute $\RT(f',\lift f', g', \lift g')$ as a sum of the
Reidmeister traces over the various intervals in the above union.

By Lemma \ref{halfzone} the index (and thus the Reidemeister trace)
will be zero on each $I_a$. Thus our summation simplifies to 
\begin{equation}\label{bigsum}
\RT(f',\lift f',g',\lift g') = \RT(I_0) + \sum_{a \in G_0} \left(
\sum_{i=1}^{n_a} \RT(I^a_i) + \sum_{j=1}^{m_a} \RT(J^a_j) + \sum_{k=1}^4
\RT(K^a_k) \right), 
\end{equation}
where for brevity we write $\RT(\cdot)$ for $\RT(f',\lift f',g',\lift
g',\cdot)$. 

The set $I_0$ contains a single coincidence point $x_0$. We know that
 $x_0 = p_X(\lift x_0)$, with $\lift x_0 \in \Coin(\lift f', \lift g')$.
Thus $[x_{\lift f', \lift g'}] = \rho(1)$, and by Lemma \ref{zerozone} the
index on $I_0$ is 1. Thus we have
\begin{equation}\label{I_0sum}
\RT(I_0) = \ind(f', g', I_0) [x_{\lift f', \lift g'}] = \rho(1). 
\end{equation}

Now by Lemma \ref{tophalflemma} we can compute the Reidemeister traces
on the $I^a_i$:
\[ \RT(I^a_i) = -\rho\left(h^a_1 \dots h^a_{i-1}\frac{\pd}{\pd a}h^a_i \right), \]
and so 
\begin{equation}\label{Isum}
\sum_{i=1}^{n_a} \RT(I^a_i) =
-\rho\left(\frac{\pd}{\pd a} (h^a_1 \dots h^a_{n_a})\right) =
-\rho\left( \frac{\pd}{\pd a}\phi(a)\right) .
\end{equation}

We can also apply Lemma \ref{tophalflemma} to compute $\RT(K^a_1)$,
which is labeled by $f$ with $a^{-1}$ and by $g$ with $a$. We obtain
\[ \RT(K^a_1) = - \rho(\phi(a)\frac{\pd}{\pd a}a^{-1}) =
\rho(\phi(a)a^{-1}). \]

Similarly we can use Lemma \ref{bottomhalflemma} to compute the
Reidemeister traces on the remaining intervals. For the $J^a_j$, we have
\[ \RT(J^a_j) = \rho\left( \phi(a)a^{-1} (aa^{-1}l^a_1 \dots
l^a_{j-1}\frac{\pd}{\pd a} l^a_j)^{-1} \right), \] 
and thus
\begin{equation}\label{Jsum}
\sum_{j=1}^{m_a} \RT(J^a_j) = \rho(\phi(a)a^{-1} i(\frac{\pd}{\pd
  a}\psi(a))). 
\end{equation}

Both $f$ and $g$ map $K^a_2$ to some initial segment of $|a|$, and so
there will be one coincidence point in $K^a_2$, and Lemma
\ref{bottomhalflemma} gives
\[ \RT(K^a_2) = \rho( \phi(a)a^{-1}(a\frac{\pd}{\pd a}a^{-1})^{-1}) =
-\rho(\phi(a)a^{-1}). \]

On $K_3^a$, we have $g(K_3^a) = |a|$ and $f(K_3^a) \subset |a|$, so
there will be one coincidence point, and again Lemma
\ref{bottomhalflemma} gives
\begin{align*}
\RT(K^a_3) &= \rho(\phi(a)a^{-1}(aa^{-1}\psi(a) \frac{\pd}{\pd
a}a^{-1})^{-1}) = -\rho(\phi(a)a^{-1}(\psi(a)a^{-1})^{-1}) \\
&= -\rho(\phi(a)\psi(a)^{-1}). 
\end{align*}

On $K_4^a$, we can change $g$ by a homotopy to make it coincidence
free with $f$, and thus 
\[ \RT(K^a_4) = 0. \]

Now summing (\ref{I_0sum}), (\ref{Isum}) and (\ref{Jsum}) with our
formulas for the $K_k^a$, our sum (\ref{bigsum}) gives
\begin{align*} 
\RT(f', \lift f', g', \lift g') &= \rho\left( 1 + \sum_{a \in G_0}
-\frac{\pd}{\pd a} \phi(a) + \phi(a)a^{-1}i(\frac{\pd}{\pd
 a}\psi(a)) - \phi(a)\psi(a)^{-1}\right) \\
&= \rho\left( 1 - \sum_{a \in G_0} 
\frac\pd{\pd a}\phi(a) + \phi(a)\psi(a)^{-1} - \phi(a)a^{-1}
i(\frac{\pd}{\pd a} \psi(a)) \right).
\end{align*}
\end{proof}

We end this section with an interpretation of the above formula with
respect to a variation on the Fox calculus. For generators $\{x_i\}$
of $G$, define the operators $\xof{x_i}: G \to \Z G$ as
follows:
\begin{align*}
\xof{x_i} 1 &= 0 \\
\xof{x_i} x_j &= \delta_{ij} \\
\xof{x_i} (uv) &= \left(\xof{x_i} u\right) v + \xof{x_i} v
\end{align*}

Analogous to the properties given for the Fox calculus we can derive:
\begin{lem}
If $\Delta$ denotes the operator above and $h_i$ are letters of $G$, we have:
\begin{align*}
\xof{x_i}x_i^{-1} &= -x_i, \\
\xof{x_i}(h_n \dots h_1) &= \sum_{k=1}^n \left( \xof{x_i} h_k \right)
h_{k-1} \dots h_1
\end{align*}
\end{lem}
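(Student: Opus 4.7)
The plan is to verify both identities directly from the defining product rule $\xof{x_i}(uv) = (\xof{x_i}u)v + \xof{x_i}v$, in parallel with how the corresponding identities for the standard Fox derivative are derived in the opening of Section 2.

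For the first identity I would compute $\xof{x_i}(x_i^{-1} x_i)$ in two ways. By the first defining rule it equals $\xof{x_i}(1) = 0$; by the product rule with $u = x_i^{-1}$ and $v = x_i$ it equals $(\xof{x_i}x_i^{-1})x_i + \xof{x_i}x_i = (\xof{x_i}x_i^{-1})x_i + 1$. Equating the two expressions and multiplying on the right by $x_i^{-1}$ solves for $\xof{x_i}x_i^{-1}$. As a consistency check one can run the same argument on the alternate factorization $x_i x_i^{-1} = 1$; both routes must yield the same value, confirming that the operator is well defined on $G$.

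For the second identity I would induct on $n$. The base case $n=1$ is immediate, with the empty trailing product $h_{0} \dots h_{1}$ read as $1$. For the inductive step, apply the product rule with $u = h_n$ and $v = h_{n-1} \dots h_1$ to obtain
\[
\xof{x_i}(h_n \dots h_1) = (\xof{x_i} h_n)(h_{n-1} \dots h_1) + \xof{x_i}(h_{n-1} \dots h_1),
\]
and then invoke the inductive hypothesis on the second summand to rewrite it as $\sum_{k=1}^{n-1} (\xof{x_i} h_k)\, h_{k-1} \dots h_1$. Reassembling gives the claimed sum from $k=1$ to $n$.

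There is no serious obstacle here; the only care required is the bookkeeping for the empty product in the base case and the check that the two factorizations of $1$ give consistent values on $x_i^{-1}$. Both points are direct mirror images of the standard Fox calculus derivations, which is unsurprising since the operator $\Delta/\Delta x_i$ differs from $\pd/\pd x_i$ only in applying its Leibniz rule from the opposite side.
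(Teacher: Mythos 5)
Your proof is correct and is essentially the paper's: the first identity is obtained by applying the product rule to a factorization of $1$ (the paper differentiates $x_i x_i^{-1}$ where you differentiate $x_i^{-1}x_i$, with the same result), and the second by the same induction on word length, merely peeling a letter off the opposite end of the word. One remark: both your computation and the paper's actually yield $\xof{x_i}x_i^{-1} = -x_i^{-1}$, not the $-x_i$ printed in the statement, which is a typo; $-x_i^{-1}$ is the value consistent with Lemma \ref{foxxof} and with the worked examples later in the paper.
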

\begin{proof}
The first statement follows by the computation:
\[ 0 = \xof{x_i}(x_i x_i^{-1}) = \left( \xof{x_i}x_i \right) x_i^{-1}
+ \xof{x_i}x_i^{-1} = x_i^{-1} + \xof{x_i}x_i^{-1}. \]

The second statement is proved by induction on $n$. For $n=1$ the
statement is clear. For the inductive case, we have
\begin{align*}
\xof{x_i}(h_n \dots h_1) &= \xof{x_i}(h_n \dots h_2h_1) = \left(
\xof{x_i}(h_n \dots h_2)\right) h_1 + \xof{x_i}h_1 \\
&= \sum_{k=2}^n \left( \xof{x_i} h_k \right) h_{k-1}\dots h_2 h_1 +
\xof{x_i}h_1 \\
&= \sum_{k=1}^n \left( \xof{x_i} h_k \right) h_{k-1} \dots h_1
\end{align*}
\end{proof}

We will use one further property, relating our new operator to the
ordinary Fox calculus operator:
\begin{lem}\label{foxxof}
For any word $w \in G$, we have
\begin{equation}\label{foxxofeq}
\xof{x_j}w = x_j^{-1} i\left( \fox{x_j} w \right) w. 
\end{equation}
\end{lem}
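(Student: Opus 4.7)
The plan is to prove equation (\ref{foxxofeq}) by induction on the length of $w$, written as a product of letters $h_1 \dots h_n$. The base cases are short: for $w = 1$ both sides vanish; for $w = x_k$ with $k = j$ both sides equal $1$, and for $w = x_k$ with $k \neq j$ both sides vanish. The case $w = x_j^{-1}$ is the one that cements the choice of the prefactor $x_j^{-1}$ on the right-hand side, since one checks that $x_j^{-1} \, i(-x_j^{-1}) \, x_j^{-1} = -x_j^{-1}$, matching the previous lemma. The remaining letter case $w = x_k^{-1}$ with $k \neq j$ is trivial.

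For the inductive step the plan is to write $w = u v$ (with $u$ a single letter and $v$ the remaining word of shorter length) and apply each operator's product rule. Using the inductive hypothesis on $u$ and $v$, the left-hand side expands as
\[ \xof{x_j}(uv) = \left( \xof{x_j} u \right) v + \xof{x_j} v = x_j^{-1} \, i\!\left(\fox{x_j} u\right) u v + x_j^{-1} \, i\!\left(\fox{x_j} v\right) v. \]
To compare this with the right-hand side of (\ref{foxxofeq}) applied to $uv$, I will use the key observation that the involution $i$ reverses group products: for any $A \in \Z G$ and any $u \in G$, we have $i(u \cdot A) = i(A) \cdot u^{-1}$, which is immediate from the definition of $i$ and bilinearity. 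Applying this to the Fox product rule $\fox{x_j}(uv) = \fox{x_j} u + u \, \fox{x_j} v$ yields
\[ i\!\left(\fox{x_j}(uv)\right) = i\!\left(\fox{x_j} u\right) + i\!\left(\fox{x_j} v\right) u^{-1}, \]
so multiplying on the left by $x_j^{-1}$ and on the right by $uv$ reproduces exactly the expression already obtained for $\xof{x_j}(uv)$.

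The only genuinely delicate point is the contravariance lemma $i(uA) = i(A) u^{-1}$, but this is immediate from the definition of $i$ as the $\Z$-linear extension of group inversion; once that is in hand, everything else is bookkeeping. I expect no real obstacle: the product rules for $\xof{x_j}$ and $\fox{x_j}$ are each left-linear and right-linear respectively in a way that exactly matches how $i$ flips the order of $u$ and $\fox{x_j} v$, which is precisely why the identity (\ref{foxxofeq}) holds with the specific prefactor $x_j^{-1}$ and the specific trailing factor of $w$.
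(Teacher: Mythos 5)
Your proof is correct and follows essentially the same route as the paper's: induction on word length, expanding both sides of (\ref{foxxofeq}) for $w = uv$ via the two product rules, and using the contravariance identity $i(uA) = i(A)u^{-1}$ to reconcile them. You are somewhat more careful than the paper about base cases: the paper checks only the trivial word, while its inductive step implicitly needs the single-letter cases you verify explicitly. One remark on your $w = x_j^{-1}$ check: the value $-x_j^{-1}$ you obtain for the right-hand side is indeed the correct value of $\xof{x_j}x_j^{-1}$ (the statement $\xof{x_i}x_i^{-1} = -x_i$ in the preceding lemma is a typo --- its own proof there yields $-x_i^{-1}$), so your base case does close correctly.
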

\begin{proof}
Our proof is by induction on the length of $w$. If $w$ is the trivial
element, then we have $\xof{x_j}w = 0$ by definition. Note also that 
\[ x_j^{-1}i\left( \fox{x_j} 1 \right) 1 = 0, \]
as desired.

For the inductive case, write $w = u v$. Then the left hand side of
(\ref{foxxofeq}) is
\[ \xof{x_j}(uv) = \left(\xof{x_j} u\right)v + \xof{x_j}v = x_j^{-1}i\left(
\fox{x_j} u\right) uv + x_j^{-1}i\left(\fox{x_j} v\right)v. \]
while the right hand side of (\ref{foxxofeq}) becomes
\begin{align*} 
x_j^{-1} i\left( \fox{x_j} (u v)\right) uv 
&= x_j^{-1}i\left( \fox{x_j}u + u\fox{x_j}v\right) uv \\
&= x_j^{-1}i\left( \fox{x_j}u\right) uv + x_j^{-1} i\left(\fox{x_j}v\right)u^{-1}uv
\end{align*}
as desired.
\end{proof}

Theorem \ref{formula} takes a nice form when expressed in terms of the
above operators: 
\begin{cor}
With notation as in Theorem \ref{formula}, we have
\[ \RT(f, \lift f, g, \lift g) = \rho\left( 1 - \sum_{a \in G_0}
\frac{\pd}{\pd a} \phi(a) - \xof{a}\psi(a) + \phi(a)\psi(a)^{-1} \right). \]
\end{cor}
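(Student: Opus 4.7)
The plan is to show that the corollary's formula agrees term-by-term with the formula from Theorem \ref{formula}. Comparing the two, the sums match except that in each summand $\phi(a)a^{-1}i(\frac{\pd}{\pd a}\psi(a))$ has been replaced by $\xof{a}\psi(a)$. So it suffices to establish, for each generator $a \in G_0$, the identity
\[ \rho\left(\phi(a)a^{-1} i\left(\fox{a}\psi(a)\right)\right) = \rho\left(\xof{a}\psi(a)\right) \]
in $\Z\Reid(\phi,\psi)$.

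First I would apply Lemma \ref{foxxof} with $w = \psi(a)$ and $x_j = a$ to rewrite the right hand side as
\[ \xof{a}\psi(a) = a^{-1} i\left(\fox{a}\psi(a)\right)\psi(a). \]
Writing $c = i(\fox{a}\psi(a)) = \sum_k c_k g_k \in \Z G$ and using that $\rho$ is linear, the problem reduces to checking, for each group element $g_k$ appearing in $c$, that
\[ \rho(\phi(a)a^{-1} g_k) = \rho(a^{-1} g_k \psi(a)). \]

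This is immediate from the definition of double twisted conjugacy: applying the defining relation $\alpha = \phi(\gamma)\beta\psi(\gamma)^{-1}$ with $\beta = \phi(a)a^{-1}g_k$ and $\gamma = a^{-1}$ produces
\[ \phi(a^{-1})\bigl(\phi(a)a^{-1}g_k\bigr)\psi(a^{-1})^{-1} = a^{-1} g_k \psi(a), \]
so the two elements lie in the same Reidemeister class. Substituting back into the formula of Theorem \ref{formula} yields the stated expression. The argument is essentially bookkeeping: the only ingredients are Lemma \ref{foxxof} and one invocation of the twisted conjugacy relation, so there is no real obstacle to overcome.
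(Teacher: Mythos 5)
Your proposal is correct and follows essentially the same route as the paper: reduce to the single identity $\rho\left(\phi(a)a^{-1}i\left(\fox{a}\psi(a)\right)\right) = \rho\left(\xof{a}\psi(a)\right)$, apply Lemma \ref{foxxof}, and invoke the twisted conjugacy relation (the paper phrases this as the identity $\rho(\phi(x)y)=\rho(y\psi(x))$, which is exactly your element-by-element check with $\gamma = a^{-1}$ extended linearly). No gaps.
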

\begin{proof}
It suffices to show that 
\[ \rho\left( \phi(a)a^{-1}i(\fox a \psi(a)) \right) = \rho\left( \xof
a \psi(a) \right). \]
Noting that $\phi(x)y = \phi(x)(y \psi(x))\psi(x)^{-1}$ for any $x,y
\in G$ gives the well known identity $\rho(\phi(x)y) =
\rho(y\psi(x))$. This identity, together with Lemma \ref{foxxof}, gives
\[ \rho\left(\phi(a)a^{-1}i(\fox a \psi(a))\right) = \rho\left(a^{-1}i(\fox a
\psi(a)) \psi(a)\right) = \rho\left(\xof a \psi(a)\right). \]
\end{proof}

\section{Some examples}
First we will show how Theorem \ref{fh} is a simple consequence of our
main result. Letting $\psi$ be the identity map gives
$\rho(\phi(a)a^{-1}) = \rho(a^{-1}\psi(a)) = \rho(1)$, and Theorem
\ref{formula} gives
\begin{align*} 
\RT(f,\lift f) &= \rho\left( 1 - \sum_{a \in G_0}
\frac{\pd}{\pd a} \phi(a) - \xof{a} a + \phi(a)a^{-1} 
 \right) \\ 
&= \rho\left( 1 - \sum_{a \in G_0} \frac{\pd}{\pd a}\phi(a) - 1 
+ 1 \right) \\ 
&= \rho\left( 1 - \sum_{a \in G_0} \frac{\pd}{\pd a}\phi(a) \right), 
\end{align*}
as desired.

Our formula also gives the classical formula for the coincidence
Nielsen number on the circle:
\begin{exl}
Let $G = \langle a \rangle$, and let $f$ and $g$ be maps which induce
the homomorphisms
\[ \phi(a) = a^n, \quad \psi(a) = a^m. \]
Without loss of generality, we will assume that $n\ge m$. Our formula
then gives
\begin{align*} 
\RT(f,\lift f,g,\lift g) &= \rho( 1 - \frac{\pd}{\pd a} a^n + \xof a a^m
- a^n a^{-m}) \\
&= \rho(1 - (1 + \dots + a^{n-1}) + (1 + \dots + a^{m-1}) - a^{n-m}) \\
&= \rho(1 - a^m - \dots - a^{n-1} - a^{n-m})
\end{align*}
A simple calculation shows that $\rho(a^i) = \rho(a^j)$ if and only if
$i\equiv j \mod n-m$. Thus $\rho(1) = \rho(a^{n-m})$, and all other
terms in the above sum are in distinct Reidemeister classes. 
Thus the Nielsen number is $n-m$, as desired.
\end{exl}

We conclude with one nontrivial computation of a coincidence Reidemeister
trace of two selfmaps of the bouquet of three circles. 
\begin{exl}
Let $X$ be a space with fundamental group $G = \langle a,b,c \rangle$,
and let $f$ and $g$ be maps which induce homomorphisms as follows:
\[ \phi: \begin{array}{rcl} a & \mapsto & acb^{-1} \\
b & \mapsto & ab \\
c & \mapsto & b \end{array} \quad 
\psi: \begin{array}{rcl} a & \mapsto & a^{-1}cb^{-1} \\
b & \mapsto & c \\
c & \mapsto & b^{-1}a\end{array} \]

Our formula gives
\begin{align*} 
\RT(f,\lift f, g, \lift g) &= \rho(1 - (1 + a^{-1}cb^{-1} + a^2) - (a
+ abc^{-1}) - (ba^{-1}b)) \\
&= \rho(-a - a^2 - abc^{-1} - ba^{-1}b - a^{-1}cb^{-1}),
\end{align*}
and we must decide the twisted conjugacy of the above elements. We use
the technique from \cite{stae08b} of abelian and nilpotent quotients.

Checking in the abelianization suffices to show that $a$
is not twisted conjugate to any of the other terms. We also see
that $a^2$ and $ba^{-1}b$ are twisted conjugate in the abelianization,
and our computation reveals that $\rho(a^2)=\rho(ba^{-1}b)$ with
conjugating element $\gamma = ac^{-1}$. Similarly, we find that
$\rho(a^2) = \rho(abc^{-1})$ by the element $\gamma = ab^{-1}$.

It remains to decide whether or not $a^2$ and $a^{-1}cb^{-1}$ are
twisted conjugate, and a check in the class 2 nilpotent quotient shows
that they are not. Thus
\[ \RT(f,\lift f, g, \lift g) = -\rho(a) - 3\rho(a^2) -
\rho(a^{-1}cb^{-1}) \]
is a fully reduced expression for the Reidemeister trace, and so in
particular the Neilsen number is 3.
\end{exl}

\end{document}